\definecolor{verylight}{gray}{0.97}
\definecolor{light}{gray}{0.9}
\definecolor{medium}{gray}{0.85}
\definecolor{dark}{gray}{0.6}
 \def\NZQ{\mathbb}               
 \def\QQ{{\NZQ Q}}
 \def\ZZ{{\NZQ Z}}
 \def\RR{{\NZQ R}}
 \def\G{{\mathcal G}}
 \def\Pc{{\mathcal P}}
  \def\Qc{{\mathcal Q}}
    \def\Ac{{\mathcal A}}
 \def\Bc{{\mathcal B}}
 \def\ab{{\mathbf a}}
 \def\xb{{\mathbf x}}
 \def\opn#1#2{\def#1{\operatorname{#2}}} 
 \opn\chara{char} \opn\length{\ell} \opn\pd{pd} \opn\rk{rk}
 \opn\projdim{proj\,dim} \opn\injdim{inj\,dim} \opn\rank{rank}
 \opn\depth{depth} \opn\grade{grade} \opn\height{height}
 \opn\embdim{emb\,dim} \opn\codim{codim}
 \opn\Tr{Tr} \opn\bigrank{big\,rank}
 \opn\superheight{superheight}\opn\lcm{lcm}
 \opn\trdeg{tr\,deg}
 \opn\reg{reg} \opn\lreg{lreg} \opn\ini{in} \opn\lpd{lpd}
 \opn\size{size} \opn\sdepth{sdepth}
 \opn\link{link}\opn\fdepth{fdepth}\opn\lex{lex}
 \opn\tr{tr}
 \opn\type{type}
 \opn\gap{gap}
 \opn\arithdeg{arith-deg}
 \opn\astab{astab}
  \opn\dstab{dstab}
  \opn\pol{pol}
  \opn\mat{mat}
  \opn\indmat{indmat}
 \opn\div{div} \opn\Div{Div} \opn\cl{cl} \opn\Cl{Cl}
 \opn\Spec{Spec} \opn\Supp{Supp} \opn\supp{supp} \opn\Sing{Sing}
 \opn\Ass{Ass} \opn\Min{Min}\opn\Mon{Mon}
 \opn\Ann{Ann} \opn\Rad{Rad} \opn\Soc{Soc}
 \opn\Im{Im} \opn\Ker{Ker} \opn\Coker{Coker} \opn\Am{Am}
 \opn\Hom{Hom} \opn\Tor{Tor} \opn\Ext{Ext} \opn\End{End}
 \opn\Aut{Aut} \opn\id{id}
 \opn\nat{nat}
 \opn\pff{pf}
 \opn\Pf{Pf} \opn\GL{GL} \opn\SL{SL} \opn\mod{mod} \opn\ord{ord}
 \opn\Gin{Gin} \opn\Hilb{Hilb}\opn\sort{sort}
 \opn\PF{PF}\opn\Ap{Ap}
 \opn\mult{mult}
 \opn\aff{aff}
 \opn\relint{relint} \opn\st{st}
 \opn\lk{lk} \opn\cn{cn} \opn\core{core} \opn\vol{vol}  \opn\inp{inp} \opn\nilpot{nilpot}
 \opn\link{link} \opn\star{star}\opn\lex{lex}\opn\set{set}
 \opn\width{wd}
 \opn\Fr{F}
 \opn\QF{QF}
 \opn\G{G}
 \opn\type{type}\opn\res{res}
 \opn\conv{conv}
 \opn\Cl{Cl}
 \opn\Fitt{Fitt}
 \opn\gr{gr}
 \def\pot#1#2{#1[\kern-0.28ex[#2]\kern-0.28ex]}
 \opn\dirlim{\underrightarrow{\lim}}
 \opn\inivlim{\underleftarrow{\lim}}
 \let\sect=\cap
 \let\dirsum=\oplus
 \let\iso=\cong
 \let\Sect=\bigcap
 \let\to=\rightarrow
 \let\To=\longrightarrow
 \def\Implies{\ifmmode\Longrightarrow \else
         \unskip${}\Longrightarrow{}$\ignorespaces\fi}
 \def\implies{\ifmmode\Rightarrow \else
         \unskip${}\Rightarrow{}$\ignorespaces\fi}
 \def\iff{\ifmmode\Longleftrightarrow \else
         \unskip${}\Longleftrightarrow{}$\ignorespaces\fi}
 \newtheorem{Theorem}{Theorem}[section]
 \newtheorem{Lemma}[Theorem]{Lemma}
 \newtheorem{Corollary}[Theorem]{Corollary}
 \newtheorem{Proposition}[Theorem]{Proposition}
 \newtheorem{Example}[Theorem]{Example}
 \let\epsilon\varepsilon
 \let\kappa=\varkappa
 \def\qed{\ifhmode\textqed\fi
       \ifmmode\ifinner\quad\qedsymbol\else\dispqed\fi\fi}
 \def\textqed{\unskip\nobreak\penalty50
        \hskip2em\hbox{}\nobreak\hfil\qedsymbol
        \parfillskip=0pt \finalhyphendemerits=0}
 \def\dispqed{\rlap{\qquad\qedsymbol}}
 \opn\dis{dis}
 \def\pnt{{\raise0.5mm\hbox{\large\bf.}}}
 \opn\Lex{Lex}
\begin{document}

\title {The reduced divisor class group and the torsion number}

\author {J\"urgen Herzog and Takayuki Hibi}

\address{J\"urgen Herzog, Fachbereich Mathematik, Universit\"at Duisburg-Essen, Campus Essen, 45117
Essen, Germany} \email{juergen.herzog@uni-essen.de}

\address{Takayuki Hibi, Department of Pure and Applied Mathematics, Graduate School of Information Science and Technology,
Osaka University, Suita, Osaka 565-0871, Japan}
\email{hibi@math.sci.osaka-u.ac.jp}

\dedicatory{ }

\begin{abstract}
The reduced divisor class group of a normal Cohen--Macaulay graded domain together with its torsion number is introduced.  They are studied in detail especially for normal affine semigroup rings.
\end{abstract}

\thanks{ The second author was partially supported by JSPS KAKENHI 19H00637}

\subjclass[2010]{Primary 13H10; Secondary  13D02, 05E40}


\keywords{}

\maketitle

\setcounter{tocdepth}{1}
\section*{Introduction}
Let $P$ be a finite partially ordered set and $R$ the normal affine semigroup ring introduced in \cite{Hibi_ring}.  Nowadays  authors call $R$ the {\em Hibi ring}, but in the present paper we  call $R$ the {\em join-meet ring} arising from $P$, because its relations are given by the joins and meets of the distributive lattice defined by $P$.  It is shown \cite{HHN} that the divisor class group $\Cl(R)$ of $R$ is free of rank $p + q + e - d - 1$, where $p$ is the number of minimal elements of $P$, $q$ is the number of maximal elements of $P$, $e$ is the number of edges of the Hasse diagram of $P$ and $d = |P|$.  On the other hand, in \cite{Hibi_ring}, by studying the generators of the canonical module $\omega_R$ of $R$, it is proved that $R$ is Gorenstein if and only if $R$ is pure, i.e., every maximal chain of $P$ has the same cardinality.  In general, it is known that $R$ is Gorenstein if and only if the canonical class $[\omega_R]$ of $R$ is equal to $0$ in $\Cl(R)$.  In other words, $[\omega_R] = 0$ in $\Cl(R)$ if and only if $P$ is pure.  It is reasonable to ask how to compute $[\omega_R]$ in terms of combinatorics of $P$.  This natural question is what motivated the authors to write this paper in the first place.  Its satisfied solution will be given in Section $2$.

Let $R$ be a Noetherian local ring or a finitely generated graded $K$-algebra for which $R$ is a normal Cohen--Macaulay domain with a canonical module $\omega_R$.  In the first half of Section $1$, the new concepts, the  {\em reduced divisor class group} of $R$ and the {\em torsion number} of $R$, are introduced.    The reduced divisor class group of $R$ is $\overline{\Cl}(R)=\Cl(R)/\ZZ[\omega_R]$ and the torsion number of $R$ is the nonnegative integer $d(R)$ defined as follows:  let $\Fitt_i(G)$ denote the $i$th Fitting ideal of a finite Abelian group, and let   $r= \rank \overline{\Cl}(R)$. If $\Fitt_r(\Cl(R))=\Fitt_r(\overline{\Cl}(R))$,  then we set $d(R)=0$. Otherwise,   $d(R)$ is given by the identity
$
\Fitt_{r}(\overline{\Cl}(R))=(d(R)).
$
One has $d(R)=0$ if and only if $R$ is Gorenstein (Lemma \ref{zero}).  When $\Cl(R)$ is free of rank $r$, the torsion number $d(R)$ has a concrete interpretation.  In fact, one has $\overline{\Cl}(R)\iso \ZZ^{r-1}\dirsum \ZZ/(d(R))$ and $[\omega_R]$ is part of a basis of $\Cl(R)$ if and only if $d(R)=1$ (Lemma \ref{free}).  When $S \subset \ZZ^n$ is a normal affine semigroup, the divisor class group of the associated normal semigroup ring $R= K[S]$ is well understood.  In the latter half of Section $1$, the basic facts related to the divisor class group $\Cl(K[S])$ of $R= K[S]$, especially the result by Chouinard \cite{Ch} on a set of generating relations of $\Cl(R)$ are summarized in short.

Section $2$ will be devoted to the study of the divisor class groups of the join-meet ring of a finite partially ordered set.  As was discussed in \cite{HHN}, the information of the facets of the cone coming from $P$  (Stanley \cite{Stanley}) yields the relation matrix of $\Cl(R)$ and it gives the explicit expression of $[\omega_R]$ in terms of the basis of $\Cl(R)$, which is the satisfied solution of the original question as well as which directly explains why $[\omega_R] = 0$ in $\Cl(R)$ if and only if $P$ is pure (Theorem \ref{canonical_class}).

On the other hand, the detailed study of torsion numbers is achieved in Section $3$.  In the join-meet ring $R$, the torsion number can be an arbitrary nonnegative integer (Example \ref{hotelherzog}).  Furthermore, if a join-meet ring $R$ is nearly Gorenstein but not Gorenstein, then one has $d(R)=1$ (Corollary \ref{near}).  However, in general, even though a normal affine semigroup ring is nearly Gorenstein but not Gorenstein, it happens that $d(R) > 1$ (Example \ref{Essen}).

\section{The canonical class  and the torsion number}
\label{1}
Let $R$ be a Noetherian local ring or a  finitely generated graded $K$-algebra. We furthermore assume that $R$ is a normal Cohen-Macaulay domain with a  canonical module $\omega_R$. The canonical module   can be identified with a divisorial ideal. Let $\Cl(R)$ be the  divisor class group of $R$. The class of a divisorial ideal $I$ of $R$  will be denoted by $[I]$.  We choose  of  system a of generators $g_1,\ldots, g_m$ of $\Cl(R)$.  Then $[\omega]$  can be written as a linear combination  of these generators, say, $[\omega_R] = \sum_{i=1}a_ig_i$. The integer coefficients of this presentation depend of course on the choice of the generators. Of special interest is the case that $[\omega_R]=0$, because this is the case if and only if $R$ is Gorenstein. However the above linear combination does not tell us immediately, whether of not $[\omega_R]=0$. Thus we are looking for a more intrinsic invariant of the canonical class. To this end, we consider the group  $\overline{\Cl}(R)=\Cl(R)/\ZZ[\omega_R]$, and a certain Fitting ideal of it. We call $\overline{\Cl}(R)$ the {\em reduced divisor class group} of $R$.

Let us briefly recall the concept of Fitting ideals and their  basic properties. Let $M$ be a finitely generated module  over a commutative ring $R$ with generators $u_1,\ldots,u_n$ and with a relation matrix $A=[a_{ij}]_{i=1,\ldots, n\atop j= 1,\ldots,m}$. In other words, $\sum_{i=1,\ldots,n}a_{ij}m_i=0$ for all $j$, and these are the generating relations of $M$ with respect to these generators.
Given these data, the $i$th Fitting ideal $\Fitt_i(M)$ of $M$ is the
ideal $I_{n-i}(A)$ of $(n-i)$-minors of $A$. The Fitting ideals are invariants of the module, that is, they do not depend on the  choice of the system of generators and the relation matrix. One has $\Fitt_0(M)\subseteq \Fitt_1(M)\subseteq \cdots \subseteq \Fitt_n(M)=R$. If $R$ is a  domain, then $\rank M=\min\{i\: \Fitt_i(M)\neq 0\}$. Moreover, $M$ is free of rank $r$ if and only if $\Fitt_i(M)=0$ for $i<r$ and $\Fitt_r(M)=R$.

We may view any finitely generated Abelian group $G$ as a $\ZZ$-module, and  hence the Fitting ideals of $G$ are defined. Suppose $G$ has $n$ generators and the relation matrix $A$ has rank $m$. Then there exists an exact sequence $0\to \ZZ^m\to \ZZ^n\to G\to 0$, which implies that $\rank G=n-m$. Thus, if $r=\rank G$, then $r$ is the smallest integer for which $\Fitt_r(G)\neq 0$.

Now we are ready to define the {\em  torsion number} $d(R)$ of $R$. Let  $r= \rank \overline{\Cl}(R)$. If $\Fitt_r(\Cl(R))=\Fitt_r(\overline{\Cl}(R))$,  then we set $d(R)=0$. Otherwise,   $d(R)$ is given by the identity
\[
\Fitt_{r}(\overline{\Cl}(R))=(d(R)).
\]
We have
\begin{Lemma}
\label{zero}
$R$ is Gorenstein if and only if $d(R)=0$.
\end{Lemma}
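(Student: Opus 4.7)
The plan is to split the argument by the additive order of $[\omega_R] \in \Cl(R)$, into the three cases $[\omega_R]=0$, $[\omega_R]$ of infinite order, and $[\omega_R]$ of finite order $k>1$, throughout using the fact (stated in the excerpt) that for any finitely generated abelian group $G$ the rank equals the smallest $i$ with $\Fitt_i(G)\neq 0$. Since Gorensteinness is equivalent to $[\omega_R]=0$, this is exactly what needs to be linked to $d(R)=0$.

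For the easy direction, if $[\omega_R]=0$ then $\ZZ[\omega_R]=0$, so $\overline{\Cl}(R)=\Cl(R)$ as abelian groups; in particular the Fitting ideals agree in every degree, forcing $d(R)=0$.

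For the converse, I would suppose $[\omega_R]\neq 0$ and rule out $d(R)=0$ in each of the two remaining cases. If $[\omega_R]$ has infinite order, then $\ZZ[\omega_R]\iso\ZZ$, and the short exact sequence
\[
0\To \ZZ \To \Cl(R) \To \overline{\Cl}(R) \To 0
\]
gives $\rank \Cl(R)=r+1$, where $r=\rank \overline{\Cl}(R)$. The rank-versus-Fitting-ideal criterion then yields $\Fitt_r(\Cl(R))=0$ while $\Fitt_r(\overline{\Cl}(R))\neq 0$, so they differ and $d(R)\neq 0$. If instead $[\omega_R]$ has finite order $k>1$, then $[\omega_R]$ lies in the torsion subgroup $T$ of $\Cl(R)$ and both groups have the same rank $r$. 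Choosing a splitting $\Cl(R)\iso \ZZ^r\dirsum T$, one has $\ZZ[\omega_R]\subseteq T$ and therefore $\overline{\Cl}(R)\iso \ZZ^r\dirsum (T/\ZZ[\omega_R])$.

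The key computation, which I would carry out from the invariant-factor presentation of the torsion part, is that for any group of the form $\ZZ^r\dirsum T_G$ with $T_G$ finite, the only nonvanishing maximal minor of the natural relation matrix is the product of the invariant factors, so $\Fitt_r(\ZZ^r\dirsum T_G)=(|T_G|)$. Applied to $\Cl(R)$ and $\overline{\Cl}(R)$ this gives
\[
\Fitt_r(\Cl(R))=(|T|)\neq (|T|/k)=\Fitt_r(\overline{\Cl}(R)),
\]
since $k>1$, so again $d(R)\neq 0$. The main obstacle is just this Fitting-ideal computation in the finite-order case; the other case is a one-line consequence of the rank criterion, and the Gorenstein direction is immediate. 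No subtlety is hidden: once one sees that $[\omega_R]$ being torsion forces $\ZZ[\omega_R]$ into the canonical torsion subgroup, the decomposition and the Fitting-ideal evaluation are routine.
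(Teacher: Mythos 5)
Your proof is correct and follows essentially the same route as the paper's: both hinge on the rank-versus-Fitting-ideal criterion to handle the case where $\rank\Cl(R)$ exceeds $\rank\overline{\Cl}(R)$, and on the identity $\Fitt_r(\ZZ^r\dirsum T)=(|T|)$ to compare the torsion parts when the ranks agree. The only difference is organizational---you argue the contrapositive by splitting on the order of $[\omega_R]$, while the paper starts from the equality of Fitting ideals and derives $[\omega_R]=0$---but the underlying decomposition and computations are the same.
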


\begin{proof}
Suppose that $R$ is Gorenstein.  Then $\overline{\Cl}(R)=\Cl(R)$, and so $\Fitt_{r}(\overline{\Cl}(R))=\Fitt_{r}(\Cl(R))$.

Conversely, suppose that $\Fitt_{r}(\overline{\Cl}(R))=\Fitt_{r}(\Cl(R))$. Let $s=\rank \Cl(R)$. Then $s\geq r\geq s-1$.  Suppose $r=s-1$. Then  $\Fitt_{s-1}(\Cl(R))=\Fitt_{r}(\overline{\Cl}(R))\neq 0$, a contradiction. Hence $\rank \Cl(R)=\rank \overline{\Cl}(R)$,
and $\Cl(R)\iso \ZZ^r\dirsum H$,where $H$ is a finite group. Since  $\rank \Cl(R)=\rank \overline{\Cl}(R)$, it follows that $[\omega_R]\in H$. Therefore, $\overline{\Cl}(R)\iso \ZZ^r\dirsum \overline{H}$, where $\overline{H}=H/\ZZ[\omega_R]$.  It follows that $|H|=\Fitt_r(\Cl(R) )= \Fitt_r(\overline{\Cl}(R) )=|\overline{H}|$. Therefore,  $H=\overline{H}$. This implies that $[\omega_R]=0$.
\end{proof}

When the divisor class group is free, then $d(R)$ has a concrete interpretation.

\begin{Lemma}
\label{free}
Suppose $\Cl(R)$ is free of rank $r$. Then $\Cl(R)\iso \ZZ^r$. Under this isomorphism,  let $[\omega_R]=(a_1,\ldots,a_r)$ with $a_i\in \ZZ$. Then $d(R)=\gcd(a_1,\ldots,a_r)$.  In particular, $\overline{\Cl}(R)\iso \ZZ^{r-1}\dirsum \ZZ/(d(R))$ and $[\omega_R]$ is part of a basis   of  $\Cl(R)$ if and only if $d(R)=1$,
\end{Lemma}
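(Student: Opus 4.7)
The plan is to reduce everything to a single computation in $\ZZ^r$. Under the given isomorphism $\Cl(R)\iso \ZZ^r$, the class $[\omega_R]$ corresponds to the column vector $(a_1,\ldots,a_r)^{T}$, and the quotient $\overline{\Cl}(R)=\ZZ^r/\ZZ(a_1,\ldots,a_r)$ has a presentation by exactly this single column. Its rank equals $r-1$ when $[\omega_R]\neq 0$, and equals $r$ when $[\omega_R]=0$. The case $[\omega_R]=0$ is immediate: then $R$ is Gorenstein, hence $d(R)=0$ by Lemma~\ref{zero}, which matches $\gcd(0,\ldots,0)=0$.

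So from now on assume $[\omega_R]\neq 0$, and set $d=\gcd(a_1,\ldots,a_r)$. First I would compute $d(R)$ directly from the definition of Fitting ideals. Since $\Cl(R)$ is free of rank $r$, we have $\Fitt_i(\Cl(R))=0$ for $i<r$; in particular $\Fitt_{r-1}(\Cl(R))=0$. On the other hand, a presentation of $\overline{\Cl}(R)$ with $r$ generators and relation matrix the single column $(a_1,\ldots,a_r)^{T}$ gives
\[
\Fitt_{r-1}(\overline{\Cl}(R))=I_1\bigl((a_1,\ldots,a_r)^{T}\bigr)=(a_1,\ldots,a_r)=(d).
\]
Since $(d)\neq 0$, the two Fitting ideals at level $r-1=\rank\overline{\Cl}(R)$ differ, so $d(R)$ is defined and equal to $d$, as desired.

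Next I would handle the structural statements. Writing $a_i=db_i$ with $\gcd(b_1,\ldots,b_r)=1$, the vector $(b_1,\ldots,b_r)$ is unimodular and therefore extends to a $\ZZ$-basis of $\ZZ^r$; in that basis $[\omega_R]$ becomes $(d,0,\ldots,0)$, whence
\[
\overline{\Cl}(R)\iso \ZZ^{r-1}\dirsum \ZZ/(d(R)).
\]
Finally, $[\omega_R]$ is part of a basis of $\Cl(R)\iso\ZZ^r$ precisely when the vector $(a_1,\ldots,a_r)$ is unimodular, i.e.\ when $\gcd(a_1,\ldots,a_r)=1$. The forward direction uses that if $[\omega_R],v_2,\ldots,v_r$ form a basis then the determinant of the resulting matrix is $\pm 1$, and Laplace expansion along the first column exhibits $1$ as an integer combination of the $a_i$; the reverse direction is the extension of a unimodular vector to a basis just invoked.

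The argument is essentially routine once one sets it up; the only point that requires care is bookkeeping the two different ranks in play (that of $\Cl(R)$ versus that of $\overline{\Cl}(R)$) and making sure the Fitting-ideal index in the definition of $d(R)$ is matched with the correct column-minor calculation. No serious obstacle is anticipated.
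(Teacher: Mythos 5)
Your proposal is correct and follows essentially the same route as the paper: both identify the single column $(a_1,\ldots,a_r)^{T}$ as the relation matrix of $\overline{\Cl}(R)$, split into the cases $[\omega_R]=0$ and $[\omega_R]\neq 0$, and read off $d(R)$ as the generator of $\Fitt_{r-1}(\overline{\Cl}(R))=(\gcd(a_1,\ldots,a_r))$. You merely supply more detail than the paper on the structural consequences (the unimodular-extension argument for $\overline{\Cl}(R)\iso \ZZ^{r-1}\dirsum \ZZ/(d(R))$ and the basis criterion), which the paper leaves implicit.
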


\begin{proof}
With respect to the basis of $\Cl(R)$ corresponding to the isomorphism $\Cl(R)\iso \ZZ^r$,   the relation matrix of $\overline{\Cl}(R)$ is given by $[a_1,\ldots, a_r]$.  We have $[\omega_R]=0$,  if and only if all $a_i=0$, and this is the case if and only if $ \rank\overline{\Cl}(R)=r$. In this case, $\Fitt_{r}(\overline{\Cl}(R))=\Fitt_{r}(\Cl(R))( =\ZZ)$,  and hence $d(R)=0$ according to our definition. On the other hand, if  $a_i\neq 0$  for some $i$, then $\rank \overline{\Cl}(R) =r-1$ and $\Fitt_{r-1}( \overline{\Cl}(R))=(\gcd(a_1,\ldots,a_r))$. This yields the statements of the lemma.
\end{proof}

Let $K$ be a field. For a normal affine semigroup $S\subset \ZZ^n$  the  divisor class group of the associated semigroup ring $R= K[S]$ is well understood. We use the notation introduced  in \cite{BH} and denote by  $\ZZ S$ the smallest subgroup of $\ZZ^{n}$  containing $S$ and by $\RR_+ S\subset \RR^{n}$ the smallest cone containing $S$.   Since $R$ is normal,  Gordon's lemma \cite[Proposition 6.1.2]{BH} guaranties that $S=\ZZ^{n}\sect \RR_+ S$.  After a suitable change of  coordinates, one may always assume that $\ZZ S=\ZZ^{n}$.
 Notice that $\RR_+ S\subset \ZZ^ n$  is a positive rational cone. Given any such cone $C$, one has that $\ZZ^n\sect C$ is a normal affine semigroup.
Let $H_1,\ldots,H_r$ be the supporting hyperplanes of $C$. Since for each $i$, the hyperplane   $H_i$ is spanned by lattice points, a  linear form  $f_i=\sum_{i=1}^{n}a_{ij}x_j$ defining $H_i$ has rational coefficients. By clearing denominators we may assume that all $a_{ij}$ are integers, and then dividing $f_i$ by the greatest common divisor of the $a_{ij}$, we may furthermore assume that $\gcd(a_{i1},\ldots,a_{in})=1$. Up to sign,  this linear form $f_i$ is uniquely determined by $H_i$. Let $p$ be a lattice point in the relative  interior of $C$. By replacing $f_i$ by $-f_i$,  if necessary, we may assume that $f_i(p)>0$ for all $i$. We call this normalized uniquely determined  linear form $f_i$ the {\em support form} of $H_i$.

We recall the following facts:

(i) Let $P_i\subset R$ be the $K$ subvector space of $K[S]$ spanned by all monomials $\xb^\ab$ with $\ab\in C\setminus H_i$. Then $P_i$ is a monomial prime ideal of height 1, and we have  $\{P_1,\ldots,P_r\}$ is the set of all monomial prime ideals of height 1 in $R$.

(ii) $\Cl(R)$ is generated by the classes $[P_1], \ldots,  [P_r]$.

(iii)  (Chouinard \cite{Ch})   $\sum_{i=1}^ra_{ij}[P_i]=0$  for $j=1,\ldots,n$, and this  is  a set of  generating relations of $\Cl(R)$. In other words, the $r\times n$-matrix
$A_R=[a_{ij}]_{i=1,\ldots,r\atop j=1,\ldots, n}$ is a  relation matrix of $\Cl(R)$. and we have an exact sequence of abelian groups
\[
0\To \ZZ^n\stackrel{A_R}{\To}\ZZ^r\To \Cl(R)\To 0.
\]
(iv) $\Cl(R)$ is free of rank $s$ if and only if $\Fitt_i(\Cl(R))=0$ for $i<s$  and $\Fitt_s(\Cl(R))=\ZZ$,   equivalently, if $I_{n-s}(A_R)=\ZZ$ and $\rank A_R=n-s$.

By a theorem of Danilov and Stanley (see \cite[Theorem 6.3.5]{BH}),  $\omega_R$ is generated by the monomials $\xb^{\ab}$ for which $\ab$ belongs to the relative interior of $C$. This implies that $\omega_R=\Sect_{i=1}^rP_i$, and hence  $[\omega_R]= \sum_{i=1}^r[P_i]$. Consequently,  $\overline{\Cl}(R)$ has the relation matrix $\overline{A}_R$, where $\overline{A}_R$ is obtained from ${A}_R$ by adding a column whose entries are all one.

If $\Cl(R)$ is free of rank $r$, then $\rank \overline{\Cl}(R)=r-1$, and hence $d(R)$ is the generator of the principal  ideal $\Fitt_{r-1}(\overline{\Cl}(R))=I_{n-r+1}(\overline{A}_R)$.

\section{Divisor class groups of join-meet rings}
\label{2}
The present section will be devoted to the discussion of the divisor class group of the normal semigroup ring, introduced in \cite{Hibi_ring}, arising from a finite partially ordered set.  Let $P = \{x_1, \ldots, x_n\}$ be a finite partially ordered set and suppose that that $i$ is smaller than $j$ whenever $x_i < x_j$ in $P$.  Let $\hat{P} = P \cup \{\hat{0}, \hat{1}\}$, where $\hat{0} < x_i < \hat{1}$ for $1 \leq i \leq n$.  Let $E(\hat{P})$ denote the set of edges of the Hasse diagram of $\hat{P}$.  Thus $(x,y) \in \hat{P}\times \hat{P}$ belongs to $E(\hat{P})$ if $x < y$ in $\hat{P}$ and $x < z < y$ for no $z \in \hat{P}$.  Following \cite[p.~10]{Stanley}, one associate each $e \in E(\hat{P})$ with the linear form $f_e$ by setting
\[
f_e
= \left\{
\begin{array}{ll}
x_i & \text{if} \, \, \, \, \, e = (x_i, \hat{1});\\
x_i - x_j& \text{if} \, \, \, \, \, e=(x_i, x_j) \in P \times P;\\
x_0 - x_{j}& \text{if} \, \, \, \, \, e = (\hat{0}, x_j).\\
\end{array}
\right.
\]
Let $C \subset \RR_+^{n+1}$ denote the cone whose supporting hyperplanes are those $H_e$ defined by $f_e$ with $e \in E(\hat{P})$.  Let $K$ be a field and $R = K[C \cap \ZZ^{n+1}]$ the affine semigroup ring, called the {\em join-meet ring} arising from $P$.  It is known \cite{Hibi_ring} that the the join-meet ring $R = K[C \cap \ZZ^{n+1}]$ is normal.  In particular, $R = K[C \cap \ZZ^{n+1}]$ is Cohen--Macaulay.  
The divisor class group $\Cl(R)$ of $R = K[C \cap \ZZ^{n+1}]$ is generated by the classes $[P_e]$ with $e \in E(\hat{P})$, where $P_e$ is the monomial prime ideal of height $1$ arising from $H_e$.  It is shown \cite{HHN} that $\Cl(R)$ is free of rank $|E(\hat{P})|-(n+1)$.

Following \cite{HHN} one fixes a spanning tree $T = \{e_0, \ldots, e_{n}\}$ of $E(\hat{P})$, where $e_i = (x_i, x_{i'})$ with $x_{0} = \hat{1}$.
Let $E(\hat{P}) = \{e_0, \ldots, e_{n}, e_{n+1}, \ldots, e_r\}$.  Let $A_R=[a_{ij}]_{i=0,\ldots,r \atop j=0,\ldots, n}$ denote the relation matrix of $\Cl(R)$, where $a_{ij}$ is the coefficient of $x_j$ in $f_{e_i}$.  The choice of the tree $T$ says that the submatrix of $A_R$ consisting of the first $n+1$ rows is an upper triangle matrix with each diagonal entry $1$.  It then follows that $[P_{n+1}], \ldots, [P_r]$ is a basis of the free abelian group $\Cl(R)$, where $P_i = P_{e_i}$.  In the divisor class group $\Cl(R)$, for each $0 \leq i \leq n$ one writes
\begin{eqnarray}
\label{expression}
[P_i] = \sum_{j=n+1}^{r} c_j^{(i)}[P_j], \, \, \, \, \, c_j^{(i)} \in \QQ.
\end{eqnarray}
Each $c_j^{(i)} \in \QQ$ can be computed as follow: For each edge $e_j = (x, y)$ with $n+1 \leq j \leq r$, the subgraph $G_j$ consisting of the edges $e_0, \ldots, e_n, e_j$ possesses a unique cycle $C_j$.  One fixes the orientation of $C_j$ with $x \to y$.  If $e_i = (x_i, x_{i'})$ with $0 \leq i \leq n$ appears in $C_j$ whose orientation is $x_i \to x_{i'}$, then one has $c_j^{(i)}=1$.  If $e_i = (x_i, x_{i'})$ with $0 \leq i \leq n$ appears in $C_j$ whose orientation is $x_{i'} \to x_i$, then one has $c_j^{(i)}=-1$.  If $e_i$ with $0 \leq i \leq n$ does not appear in $C_j$, then one has $c_j^{(i)}=0$.

One claims the validity of the above computation of $c_j^{(i)}$.  In other words, $[P_0], \ldots, [P_n]$ with the expression (\ref{expression}) together with $[P_{n+1}], \ldots, [P_r]$ could satisfy the relations of the columns of $A_R$.  Let $x_i \in P \cup \{\hat{0}\}$ with $\hat{0} = x_0$.  Let $\Ac$ denote the set of edges of $\hat{P}$ of the form $(x_i, x_{i'})$ and $\Bc$ that of the form $(x_{i''}, x_i)$.  If the cycle $C_j$, where $n+1 \leq j \leq r$ intersects $\Ac \cup \Bc$, then one of the followings occurs:

(i) $|C_j \cap \Ac| = |C_j \cap \Bc| = 1$;

(ii) $|C_j \cap \Ac| = 2$ and  $C_j \cap \Bc = \emptyset$;

(iii) $C_j \cap \Ac = \emptyset$ and $|C_j \cap \Bc| = 1$.

\noindent
In each of the above (i), (ii) and (iii), the total sum of $[P_{j}]$ appearing in $[P_e]$'s with $e \in \Ac$ is equal to that of $[P_{j}]$ appearing in $[P_e]$'s with $e \in \Bc$.  Hence $[P_0], \ldots, [P_n]$ with the expression (\ref{expression}) together with $[P_{n+1}], \ldots, [P_r]$ could satisfy the relations of the $i$\,th column of $A_R$, as desired.

\begin{Example}
{\em
Let $P = \{x_1,x_2,x_3,x_4,x_5,x_6\}$ be the finite partially ordered set of Figure $1$.
\begin{figure}
\includegraphics[width=34mm]{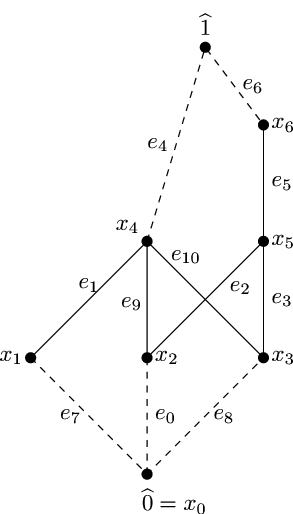}
\caption{poset $P$}
\end{figure}

\noindent
The tree $T = \{e_0,\ldots,e_6\}$ of Figure $2$ satisfies the above condition.
\begin{figure}
\includegraphics[width=34mm]{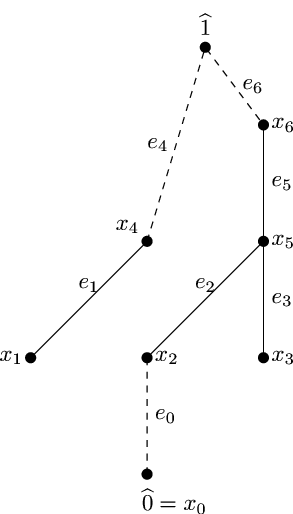}
\caption{tree $T$}
\end{figure}

\noindent
The cycle $C_7$ consists of the edges $e_7, e_1, e_4, e_6, e_5, e_2, e_0$ (Figure $3$).
\begin{figure}
\includegraphics[width=34mm]{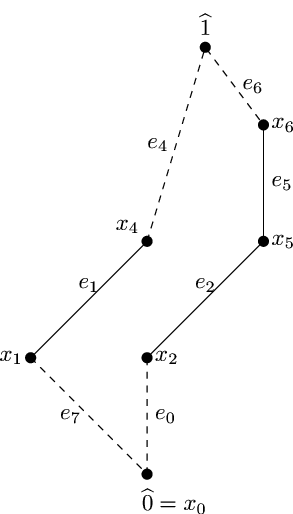}
\caption{cycle $C_7$}
\end{figure}

\noindent
Fix the orientation of $C_7$ with
\[
x_0 \to x_1 \to x_4 \to \hat{0} \to x_6 \to x_5 \to x_2 \to x_0.
\]
Thus the coefficient of $[P_7]$ in each of $[P_1], [P_4]$ is $1$, the coefficient of $[P_7]$ in each of $[P_0], [P_2], [P_5], [P_6]$ is $-1$ and the coefficient of $[P_7]$ in $[P_3]$ is $0$.  One has
\begin{align*}
[P_0] &= -[P_7] - [P_8],\\
[P_1] &= [P_7], \\
[P_2] &= -[P_7] - [P_8] - [P_9], \\
[P_3] &=   [P_8] - [P_{10}], \\
[P_4] &= [P_7] +  [P_9] + [P_{10}], \\
[P_5] &= -[P_7] - [P_9] - [P_{10}], \\
[P_6] &= -[P_7]  - [P_9] - [P_{10}].
\end{align*}
Thus in particular
\[
[\omega_R] = -[P_7]-[P_9]-[P_{10}].
\]
}
\end{Example}

Now, it is of interest to know  when $[\omega_R] = \sum_{e \in E(\hat{P})}[P_e] = 0$ in $\Cl(R)$, because this is the case if and only if $R$ is Gorenstein.

\begin{Theorem}
\label{canonical_class}
In $\Cl(R)$, one has $[\omega_R] = 0$ if and only if $P$ is pure.
\end{Theorem}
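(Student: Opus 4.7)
My plan is to expand $[\omega_R] = \sum_{i=0}^{r}[P_i]$ in the free $\ZZ$-basis $[P_{n+1}], \ldots, [P_r]$ of $\Cl(R)$ using (\ref{expression}), and then translate the vanishing of its coefficients into a combinatorial condition on the Hasse diagram of $\hat{P}$. Separating tree from non-tree contributions gives
\[
[\omega_R] \;=\; \sum_{j=n+1}^{r} \Bigl(1 + \sum_{i=0}^{n} c_j^{(i)}\Bigr)\,[P_j],
\]
so $[\omega_R] = 0$ if and only if $1 + \sum_{i=0}^{n} c_j^{(i)} = 0$ for every $j \in \{n+1,\ldots,r\}$.

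Next I would give the combinatorial interpretation. Orient every edge of the Hasse diagram of $\hat{P}$ from its smaller to its larger endpoint; call this its natural orientation. By the formula for $c_j^{(i)}$ recalled just before the statement, $c_j^{(i)}$ equals $+1$ or $-1$ according to whether the cycle $C_j$ traverses the tree edge $e_i$ in the natural or opposite direction, and by construction $e_j$ itself is traversed in the natural direction, which accounts for the extra $+1$. Hence the coefficient $1 + \sum_{i=0}^{n} c_j^{(i)}$ equals the number of edges of $C_j$ traversed in the natural direction minus the number traversed against it, and $[\omega_R]=0$ is equivalent to every fundamental cycle $C_j$ being \emph{balanced} in this sense.

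Finally, I would argue that balancedness of every $C_j$ is equivalent to $P$ being pure. The Hasse diagram of $\hat{P}$ is connected, and $C_{n+1},\ldots,C_{r}$ form a $\ZZ$-basis of its cycle space; since the up-minus-down count is a $\ZZ$-linear functional on the cycle space, vanishing on this basis is equivalent to vanishing on every cycle. By the standard potential-function argument, this is the same as the existence of a map $\rho : \hat{P} \to \ZZ$ with $\rho(y) - \rho(x) = 1$ for every cover $x < y$ in $\hat{P}$---that is, $\hat{P}$ being graded. Because $\hat{P}$ is bounded, graded coincides with pure; and maximal chains of $\hat{P}$ are obtained from those of $P$ by adjoining $\hat{0}$ and $\hat{1}$, so $\hat{P}$ is pure iff $P$ is pure, completing the equivalence. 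The one step that really needs care is the passage from balancedness of the fundamental cycles to the existence of $\rho$; everything else reduces to bookkeeping or standard facts about the cycle space of a graph.
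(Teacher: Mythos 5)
Your argument is correct. Its first half---writing $[\omega_R]=\sum_{j=n+1}^{r}\bigl(1+\sum_{i=0}^{n}c_j^{(i)}\bigr)[P_j]$ and reading each coefficient as the up-minus-down count along the fundamental cycle $C_j$---is exactly the mechanism underlying the paper's proof, and your ``if'' direction (purity makes every $C_j$ balanced, by telescoping the rank function of $\hat{P}$) coincides with the paper's. You genuinely diverge on ``only if''. The paper proceeds by explicit construction: from non-purity it extracts two internally disjoint maximal chains of some interval $[x,y]$ of $\hat{P}$ with different lengths $s<s'$, chooses a spanning tree containing all but one of their edges, and exhibits the resulting fundamental cycle as a witness whose coefficient in $[\omega_R]$ equals $s'-s\neq 0$. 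You instead keep the tree fixed and argue structurally: the up-minus-down count is a $\ZZ$-linear functional on the cycle space, the $C_j$ form a basis of that space, so vanishing on all of them forces vanishing on every cycle, and the potential-function argument then produces $\rho:\hat{P}\to\ZZ$ with $\rho(y)-\rho(x)=1$ on covers, i.e.\ $\hat{P}$ is graded, i.e.\ $P$ is pure. Your route avoids two points the paper leaves tacit: the combinatorial fact that a non-pure $\hat{P}$ contains an interval with two internally disjoint maximal chains of different lengths, and the mid-proof change of spanning tree. The paper's route, on the other hand, yields the explicit coefficient $s'-s$, which is precisely what is recycled in Proposition \ref{bound} to show that $d(R)$ divides $a-b$; your equivalence argument, while cleaner for the theorem itself, does not by itself deliver that quantitative refinement.
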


\begin{proof}  One employs the notation as above.

{\bf (``if'')}  Suppose that $P$ is pure.  Then, clearly, in each cycle $C_j$, the number of $e_i = (x_i, x_{i'})$ with $0 \leq i \leq n$ appearing in $C_j$ whose orientation is $x_i \to x_{i'}$ is exactly one less than that of $e_i = (x_i, x_{i'})$ with $0 \leq i \leq n$ appearing in $C_j$ whose orientation is $x_{i'} \to x_i$.  Hence each coefficient $q_j$ of $[\omega_R] = \sum_{j=n+1}^rq_i[P_j]$ is equal to $0$.

\medskip

\noindent
{\bf (``only if'')}  Suppose that $P$ is {\em not} pure and
\[
C : x < x_{i_1} < \cdots < x_{i_s} < y, \, \, \, \, \, \, \, \, \, \, C' : x < x_{i'_1} < \cdots < x_{i'_{s'}} < y
\]
are maximal chains of the interval $[x, y]$ of $\hat{P}$ with $s < s'$ for which $i_j \neq i'_{j'}$ for each $j$ and $j'$.  One can choose a tree $T$ which contains all edges except for $(x, x_{i'_1})$ appearing in the chains $C$ and $C'$.  Let $e_j = (x, x_{i'_1})$.  It then follows that the coefficient $q_j$ of $[\omega_R] = \sum_{j=n+1}^rq_j[P_j]$ is equal to $s' - s \neq 0$.  Hence $[\omega_R] \neq o$, as desired.
\end{proof}

Theorem \ref{canonical_class} gives an alternative proof to the old results that the join-meet ring $R = K[C \cap \ZZ^{n+1}]$ is Gorenstein if and only if $P$ is pure (\cite[p.~105]{Hibi_ring}).

\section{Computation of the torsion number}
\label{3}
Let $R$ be a normal Cohen--Macaulay domain with free divisor class group of rank $r$, and let $b_1,\ldots, b_r$ be a basis of $\Cl(R)$. Then $[\omega_R]=\sum_{i=1}^rc_ib_i$ with $c_i\in \ZZ$ for all $i$. Of course, a basis of $\Cl(R)$ is not uniquely determined. In Section $2$ we recalled that for given poset $P$ each spanning tree of $E(\hat{P})$ yields a basis of the class group of the associated join-meet ring.  For different bases the coefficients $c_i$ in the presentation of $[\omega_R]$ differ.  However $\gcd(c_1,\ldots,c_r)$ is independent of the choice of the basis, because it is just the torsion number $d(R)$ of $R$, defined in Section~\ref{1}.

\begin{Example}
\label{hotelherzog}
{\em $P$ be the poset with components $P_1$ and $P_2$ where $P_1$ and $P_2$ are chains of length $a$ and $b$, say, $P_1: x_0 < \cdots < x_a$ and $P_2: y_0 < \cdots < y_b$.  Fix the tree $T$ in $\hat{P}$ consisting of the edges belonging to $E(\hat{P}) \setminus (x_0,x_a)$, where $x_0 = \hat{0}$.  Then $[P_{e}]$ with $e = (x_0,x_a)$ is a basis of $\Cl(R)$.  The computation in Section $2$ yields $[P_{e'}] = [P_{e}]$ if $e' \in E(P_1) \cup \{(x_a, \hat{1})\}$ and $[P_{e''}] = -[P_{e}]$ if $e'' \in E(P_2) \cup \{(\hat{0}, y_1), (y_b, \hat{1})\}$.  Hence $[\omega_R]=(a-b)[P_e]$ and $d(R) = a-b$.
}
\end{Example}

The Example \ref{hotelherzog} shows that $d(R)$ can be any number. However, for any join-meet ring, the torsion number can be bounded as follow.

\begin{Proposition}
\label{bound}
Let $P$ be a finite poset.  Let $L_1:x_0 < \cdots < x_a$ and $L_2: y_0 < \cdots < y_b$ be maximal chains of $P$ for which $x_i \neq y_j$ for each $i$ and $j$.  Then $d(R)$ divides $a-b$.
\end{Proposition}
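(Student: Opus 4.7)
The plan is to adapt the cycle-based computation from the proof of Theorem \ref{canonical_class}: construct one explicit spanning tree $T$ of the Hasse diagram of $\hat{P}$ so that, in the resulting basis of $\Cl(R)$, the expansion of $[\omega_R]$ has $\pm(a-b)$ as one of its coefficients. Since the torsion number $d(R)$ is the greatest common divisor of the coefficients of $[\omega_R]$ in any basis of $\Cl(R)$ (as recalled at the start of Section~\ref{3}), this immediately forces $d(R) \mid (a-b)$.

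To build $T$, extend the given maximal chains of $P$ to the chains $\tilde{L}_1 \colon \hat{0} < x_0 < \cdots < x_a < \hat{1}$ and $\tilde{L}_2 \colon \hat{0} < y_0 < \cdots < y_b < \hat{1}$ in $\hat{P}$, whose vertex sets overlap only in $\{\hat{0},\hat{1}\}$ by the disjointness hypothesis. Set $e_j = (\hat{0}, y_0)$ and remove it from the edge set of $\tilde{L}_1 \cup \tilde{L}_2$; what remains is the union, at the shared vertex $\hat{1}$, of the path $\tilde{L}_1$ from $\hat{0}$ to $\hat{1}$ and the path $y_0 \to y_1 \to \cdots \to y_b \to \hat{1}$. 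This subgraph is acyclic, so it extends to a spanning tree $T$ of the Hasse diagram of $\hat{P}$.

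With respect to $T$, the cycle $C_j$ attached to the non-tree edge $e_j$ is the unique circuit in $T \cup \{e_j\}$; since $T$ already contains a path from $\hat{0}$ to $y_0$ via $\tilde{L}_1$ and then up $\tilde{L}_2$, this cycle coincides with $\tilde{L}_1 \cup \tilde{L}_2$. Orient $C_j$ so that $e_j$ is traversed $\hat{0} \to y_0$. The cycle then ascends $\tilde{L}_2$ from $y_0$ to $\hat{1}$ along the Hasse-diagram orientation on $b+1$ tree edges, each contributing $+1$ to $c_j^{(i)}$, and descends $\tilde{L}_1$ from $\hat{1}$ to $\hat{0}$ against the Hasse-diagram orientation on $a+2$ tree edges, each contributing $-1$. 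By the recipe from Section~\ref{2} applied to $[\omega_R] = \sum_e [P_e]$ in the basis $\{[P_k] : e_k \notin T\}$, the coefficient of $[P_j]$ equals $1 + (b+1) - (a+2) = b - a$, and consequently $d(R) \mid (a-b)$. The only step requiring care is ensuring that no edges added during the extension to a full spanning tree can intrude into $C_j$, but this is immediate from uniqueness of paths in a tree.
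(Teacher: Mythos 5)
Your argument is correct and follows essentially the same route as the paper: you build a spanning tree containing both extended chains minus one bottom edge, identify the fundamental cycle of the omitted edge as $\tilde{L}_1\cup\tilde{L}_2$, and read off $\pm(a-b)$ as a coefficient of $[\omega_R]$, so that $d(R)=\gcd$ of the coefficients divides $a-b$. The only (immaterial) difference is that the paper omits $(\hat{0},x_0)$ rather than $(\hat{0},y_0)$, obtaining $a-b$ instead of $b-a$, and defers the coefficient count to Example \ref{hotelherzog} where you carry it out explicitly.
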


\begin{proof}
Fix the tree $T$ in $\hat{P}$ whose edges contains all edges belonging to
\[
E=E(L_1) \cup E(L_2) \cup \{(x_a,\hat{1}),(\hat{0},y_0),(y_b,\hat{1})\}.
\]
Then $e=(\hat{0},x_0) \not\in E(T)$.  The unique cycle in $T$ consists of the edges belonging to $E \cup \{e\}$.  Hence, as was done in Example \ref{hotelherzog}, the coefficient of $[P_e]$ of $[\omega_R]$ is equal to $a - b$.  Thus in particular $d(R)$ divides $a-b$, as desired.
\end{proof}

If $R$ is nearly Gorenstein but not Gorenstein, then one has $a - b = 1$ (\cite{nearlyGor}).  In particular, one has $d(R)=1$.

\begin{Corollary}
\label{near}
If the join-meet ring $R$ is nearly Gorenstein but not Gorenstein, then $d(R)=1$.
\end{Corollary}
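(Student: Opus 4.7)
The plan is to derive the corollary as an essentially immediate consequence of Proposition \ref{bound} combined with the structural characterization of nearly Gorenstein join-meet rings from \cite{nearlyGor}. The setup of Proposition \ref{bound} requires two \emph{disjoint} maximal chains $L_1, L_2$ of $P$ (i.e.\ sharing no elements), and its conclusion is that $d(R)$ divides $|a-b|$, where $a$ and $b$ are the lengths of the two chains. So the only thing one needs from the nearly Gorenstein hypothesis is the existence of such a pair of disjoint maximal chains whose lengths differ by exactly $1$.

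First I would invoke the result of \cite{nearlyGor} that is alluded to in the paragraph preceding the corollary: if the join-meet ring $R$ arising from $P$ is nearly Gorenstein but not Gorenstein, then $P$ contains two disjoint maximal chains $L_1: x_0<\cdots<x_a$ and $L_2: y_0<\cdots<y_b$ with $a-b=1$. Next I would apply Proposition \ref{bound} to this very pair $L_1, L_2$, yielding that $d(R)$ divides $a-b=1$. Finally, I would use Lemma \ref{zero} to rule out the degenerate case: since $R$ is not Gorenstein, $d(R)\neq 0$, and the only nonnegative integer different from $0$ that divides $1$ is $1$ itself. Hence $d(R)=1$.

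The only potentially delicate step is making sure the chains furnished by \cite{nearlyGor} satisfy the disjointness hypothesis in Proposition \ref{bound}; once this is in hand, the conclusion is purely formal. Everything else is a direct consequence of material already established earlier in the paper.
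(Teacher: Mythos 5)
Your proposal is correct and follows essentially the same route as the paper, whose entire argument is the sentence preceding the corollary: the nearly Gorenstein characterization from \cite{nearlyGor} gives $a-b=1$ for a suitable pair of chains, and Proposition \ref{bound} then forces $d(R)=1$. Your extra appeal to Lemma \ref{zero} is harmless but not needed, since $d(R)\mid 1$ already excludes $d(R)=0$.
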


Here is another example of a nearly Gorenstein ring which is not Gorenstein and whose torsion number is $1$.

\begin{Proposition}
Let $K$ be a field,  let $X$ be an $m\times n$-matrix of indeterminates with $m\leq n$, and let $R=K[X]/I_{r+1}(X)$. Then $\Cl(R)$ is free of rank $1$,  and if $R$ is nearly Gorenstein but not Gorenstein  then  $d(R)=1$.
\end{Proposition}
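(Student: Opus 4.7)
The plan is to use classical results on determinantal rings together with the known characterization of nearly Gorenstein-ness in that setting, reducing the statement to a one-line arithmetic computation in $\ZZ\iso \Cl(R)$.

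First, I would invoke the classical description of $R=K[X]/I_{r+1}(X)$: it is a normal Cohen--Macaulay domain (Hochster--Eagon), and its divisor class group is infinite cyclic, $\Cl(R)\iso \ZZ$, with generator the class $[P]$ of the height-one prime $P\subset R$ arising as the image of the ideal of $r$-minors of the first $r$ rows of $X$. This is the theorem of Bruns (see Chapter 8 of Bruns--Vetter, \emph{Determinantal Rings}), and settles the first assertion that $\Cl(R)$ is free of rank $1$.

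Second, I would compute the canonical class explicitly. The standard description of the canonical module of a generic determinantal ring yields $\omega_R\iso P^{(n-m)}$ as a divisorial ideal, and hence $[\omega_R]=(n-m)[P]$ in $\Cl(R)$. Applying Lemma~\ref{free} to the free rank-$1$ group $\Cl(R)$, this gives $d(R)=|n-m|=n-m$ (recall the standing assumption $m\leq n$).

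Finally, I would bring in the nearly Gorenstein input from \cite{nearlyGor}: the generic determinantal ring $R=K[X]/I_{r+1}(X)$ is nearly Gorenstein if and only if $n-m\leq 1$. Combined with the well-known criterion that $R$ is Gorenstein precisely when $n=m$, the hypothesis that $R$ is nearly Gorenstein but not Gorenstein forces $n-m=1$, so $d(R)=1$.

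The hardest step to nail down cleanly is the precise coefficient in $[\omega_R]=(n-m)[P]$, which depends on the normalization of the generator of $\Cl(R)$ (and on correctly translating the symbolic-power presentation of $\omega_R$ into the class group). Once that normalization is in place, the argument is an immediate combination of Lemma~\ref{free} with the nearly Gorenstein characterization, and no further case analysis is required.
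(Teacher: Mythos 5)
Your proposal is correct and follows essentially the same route as the paper: $\Cl(R)\iso\ZZ$ generated by $[P]$, $\omega_R=P^{(n-m)}$ so $[\omega_R]=(n-m)[P]$ and $d(R)=n-m$, and the nearly-Gorenstein hypothesis forces $n-m=1$. One small imprecision: the correct characterization (which the paper derives from $\tr(\omega_R)=I_r(X)^{n-m}R$ in \cite{FHST}) is that $R$ is nearly Gorenstein but not Gorenstein if and only if $r=1$ \emph{and} $n-m=1$, not simply $n-m\leq 1$; however, the implication you actually use (nearly Gorenstein and not Gorenstein implies $n-m=1$) remains valid, so the argument goes through.
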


\begin{proof}
The class group of  $R$ is isomorphic to $[P]\ZZ\iso \ZZ$,   where $P$ is the prime ideal  in $R$ generated by the $r$-minors of the first $r$ rows $X$ modulo $I_{r+1}(X)$,  see \cite[Theorem 7.3.5]{BH}.  Furthermore, $\omega_R=P^{(n-m)}$, see \cite[Theorem7.3.6]{BH}.

In \cite[Theorem 1.1]{FHST} it is shown that  $\tr(\omega_R)=I_{r}(X)^{n-m}R$. From  this fact it follows that $R$  is nearly Gorenstein  but not Gorenstein if and only if $r=1$ and $n-m=1$, and that in this case $[\omega_R]=[P]$. This implies that $d(R)=1$.
\end{proof}

One would expect that  torsion number, if defined, is always $1$ for rings which are nearly Gorenstein but not Gorenstein.  However, the following family of examples show that this is not the case.

\begin{Example}
\label{Essen}
{\em
Let $R_m = K[x_1, \ldots, x_m]$ denote the polynomial ring in $m$ variables over a field $K$ and $S_n = K[y_1, \ldots, y_n]$ that in $n$ variables over $K$.  Let $R_m^{(p)}$, where $1 \leq p \in \ZZ$, be the $p$\,th Veronese subring of $R_m$.  It is known that $R_m^{(p)}$ is normal and Cohen--Macaulay (\cite[p.~193]{GW}).  Furthermore, $R_m^{(p)}$ is Gorenstein if and only if $p$ divides $m$ (\cite{Mat})).  Fix positive integers $m, n, p$ and $q$ and write $R = R_m^{(p)}\#S_n^{(q)}$ for the Segre product of $R_m^{(p)}$ and $S_n^{(q)}$.

Let $\Pc \subset \RR^{m+n}$ denote the convex polytope consisting of those $$(a_1, \ldots, a_m, b_1, \ldots, b_n) \in \RR^{m+n}$$ for which

(i) $a_i \geq 0$ for $1 \leq i \leq m$;

(ii) $b_j \geq 0$ for $1 \leq j \leq n$;

(iii) $\sum_{i=1}^{m} a_i = p$;

(iv) $\sum_{j=1}^{n} b_j = q$.

\noindent
As is discussed in \cite{DH}, the convex polytope $\Pc$ is a lattice polytope of dimension $m+n-2$.  (A convex polytope is called a {\em lattice polytope} if each of the vertices has integer coordinates.)  The Segre product $R = R_m^{(p)}\#S_n^{(q)}$ is the toric ring of $\Pc$.  In other words, $R$ is generated by those monomials $$\left(\prod_{i=1}^{m} x_i^{a_i}\right)\left(\prod_{j=1}^{n} y_i^{a_i}\right)$$ with $(a_1, \ldots, a_m, b_1, \ldots, b_n) \in \Pc \cap \ZZ^{n+m}$.  Furthermore, $R$ is normal and Cohen--Macaulay (\cite[p.~198]{GW}).  Now, one introduces the lattice polytope $\Qc \subset \RR^{m+n-2}$ of dimension $m+n-2$ consisting of those $$(a_1, \ldots, a_{m-1}, b_1, \ldots, b_{n-1}) \in \RR^{m+n-2}$$ for which

(i) $a_i \geq 0$ for $1 \leq i \leq m-1$;

(ii) $b_j \geq 0$ for $1 \leq j \leq n-1$;

(iii) $\sum_{i=1}^{m-1} a_i \leq p$;

(iv) $\sum_{j=1}^{n-1} b_j \leq q$.

\noindent
The facets of $\Qc$ are

(i) $x_i = 0$ for $1 \leq i \leq m-1$;

(ii) $y_j = 0$ for $1 \leq j \leq n-1$;

(iii) $\sum_{i=1}^{m-1} x_i = p$;

(iv) $\sum_{j=1}^{n-1} y_j = q$.

\noindent
One can regard the Segre product $R$ to be the toric ring of $\Qc$.  Let $C \subset \RR_+^{m+n+1}$ denote the cone whose supporting hyperplanes are

(i) $H_i : x_i = 0$ for $1 \leq i \leq m-1$;

(ii) $H'_j : y_j = 0$ for $1 \leq j \leq n-1$;

(iii) $H: -\sum_{i=1}^{m-1} x_i + pt = 0$;

(iv) $H': -\sum_{j=1}^{n-1} y_j + qt = 0$.

\noindent
Let $P_i$ denote the monomial prime ideal of height $1$ arising from $H_i$ and $Q_j$ that arising from $H'_j$.  Let $P$ denote the monomial prime ideal of height $1$ arising from $H$ and $Q$ that arising from $H'$.  The divisor class group $\Cl(R)$ is generated by
\[
[P_1], \ldots, [P_{m-1}], [P], [Q_1], \ldots, [Q_{n-1}], [Q]
\]
whose relations are
\[
[P_1] = \cdots = [P_{m-1}] = [P], \, \, [Q_1] = \cdots = [Q_{n-1}] = [Q], \, \, p[P] + q[Q] = 0.
\]
Hence
\[
\Cl(R) = (\ZZ[P] \bigoplus \ZZ[Q])/(p[P] + q[Q]).
\]
In particular one has $\Cl(R) =\ZZ$ if and only if $p$ and $q$ are relatively prime.  Since the canonical class is $[\omega_R] = m[P]+n[Q]$, it follows that $R$ is Gorenstein if and only if $(m,n) = c(p,q)$ for some integer $c > 1$.  In particular if $R$ is Gorenstein, then each of $R_m^{(p)}$ and $R_n^{(q)}$ is Gorenstein.  (See also \cite[chapter 4]{GW}.)  Furthermore, the Segre product $R$ is nearly Gorenstein, but not Gorenstein if and only if $p$ divides $m$, $q$ divides $n$ and $|m/p - n/q| = 1$ (\cite{nearlyGor}).  If $p$ and $q$ are relatively prime and if $p'$ and $q'$ are integers with $p'p + q'q = 1$, then $\Cl(R)$ is free of rank $1$ which is generated by $-q'[P] + p'[Q]$.

For example, $R = R_4^{(2)}\#S_9^{(3)}$ is nearly Gorenstein, but not Gorenstein and $\Cl(R)$ is free of rank $1$ which is generated by $[P] + 2[Q]$.  Since
\[
[\omega_R] = 4[P] + 9[Q] = - (2[P] + 3[Q]) + 6 ([P] + 2[Q]),
\]
one has $d(R) = 6$.
}
\end{Example}

Finally, we add an example of the computation of the torsion number of $R$ when $\Cl(R)$ is not free.

\begin{Example}
{\em
Let $K$ be a filed, and let $R=K[x_1,\ldots,x_n]^{(r)}$  be the $r$th Veronese subring of the polynomial ring $K[x_1,\ldots,x_n]$. Then the support forms of the hyperplanes describing the cone of  the natural embedding of the semigroup describing $R$ are $x_i\geq 0$ for $i=1,\ldots,n-1$  and $-(x_1+\cdots +x_{n-1})+rt\geq 0$. Therefore, we have
\[
\overline{A}_R=
\begin{bmatrix}
1& 0&\cdots&0 & 0& 1\\
0 & 1&\cdots  & 0& 0 & 1\\
\vdots & \vdots &\ddots &\vdots & \vdots &\vdots \\
0 & 0 & \cdots& 1 & 0 & 1\\
-1 & -1  &\cdots& -1 & r  &1
\end{bmatrix}
\]
The torsion number of $R$ is then given by $I_n(\overline{A}_R)=(r,n)$.  Therefore, $d(R)=(\gcd(r,n)$.

It is shown in \cite[Corollary 4.8]{nearlyGor}  that any Veronese subring of the polynomial is nearly Gorenstein, $d(R)$ can be any number.
}
\end{Example}


\begin{thebibliography}{99}

\bibitem{BH}
 W.~Bruns and  J.~Herzog,  Cohen-Macaulay rings,  Cambridge Studies in Advanced Mathematics {\bf 39},  Cambridge University Press, Cambridge, 1993.


\bibitem{Ch} L.G.~Chouinard II,  Krull semigroups and divisor class groups, Canadian J. Math. {\bf 33},  (1981),  1459--1468.

\bibitem{DH}
E.~De~Negri and T.~Hibi, Gorenstein algebras of Veronese type, {\em J. Algebra} {\bf 193} (1997), 629--639.

\bibitem{GW}
S.~Goto and K.~Watanabe, On graded rings, I, {\em J. Math. Soc. Japan} {\bf 30} (1978), 179--213.

\bibitem{HHN}
M.~Hashimoto, T.~Hibi and A.~Noma, Divisor class groups of affine semigroup rings associated with distributive lattices, {\em J. Algebra} {\bf 149} (1992), 352--357.

\bibitem{nearlyGor}
J.~Herzog, T.~Hibi and D.~I.~Stamate, The trace of the canonical module, {\em Israel J. Math.} {\bf 233} (2019), 133--165.

 \bibitem{FHST} A.~Ficarra,  J.~Herzog,  D.~I.~Stamate and V.~Trivedi,  The  canonical trace of determinantal rings.

\bibitem{Hibi_ring}
T.~Hibi, Distributive lattices, affine semigroup rings and algebras with straightening laws, {\it in} ``Commutative Algebra and Combinatorics'' (M. Nagata and H. Matsumura, Eds.), Advanced Studies in Pure Math., Volume 11, North--Holland, Amsterdam, 1987, pp. 93--109.

\bibitem{Mat}
T.~Matsuoka, On an invariant of Veronesean rings, {\em Proc. Japan Acad. 50} (1974), 287--291.

\bibitem{Stanley}
R.~P.~Stanley, Two poset polytopes, {\em Disc. Comput. Geom.} {\bf 1} (1986), 9--23.

\end{thebibliography}
\end{document}